\documentclass[reqno]{amsart}
\usepackage{latexsym,amssymb,amsthm,amsmath}
\usepackage{graphicx}

\usepackage[bookmarksnumbered, colorlinks]{hyperref}
\usepackage{amssymb}
\usepackage{amsmath}
\title[Fibered boundary,Seiberg-Witten Equations]{Inequalities for 4-Manifolds with Ends using Fibered boundary metric and Seiberg-Witten Equations}

\author[Shyamal Kumar Hui and Chandan Kumar Adak]{Shyamal Kumar Hui and Chandan Kumar Adak}

\subjclass[2020]{58J20, 53C25, 58J60, 57R57, 58J28}
\keywords{Fibered Geometry at infinity; Seiberg-Witten Equation; Chern-Simons correction term; Eta invariant}

\usepackage{multicol}
\theoremstyle{plain}
\newtheorem{theorem}{Theorem}[section]
\newtheorem*{Theorem B}{Theorem B}
\newtheorem*{Theorem A}{Theorem A}
\newtheorem*{Theorem C}{Theorem C}
\newtheorem{lemma}[theorem]{Lemma}
\newtheorem{proposition}[theorem]{Proposition}
\newtheorem{corollary}[theorem]{Corollary}

\numberwithin{equation}{section}

\theoremstyle{remark}

\begin{document}

\maketitle
\begin{abstract}
	 We deduce a new inequality using the Hitchin-Thorpe inequality for noncompact complete four-dimensional manifolds with fibered geometry at infinity and Seiberg-Witten gauge-theoretic techniques. Fibered geometry at infinity is either fibered boundary metric or fibered cusp metric. We prove it for fibered boundary metric when the fiber at boundary is a single point.
\end{abstract}
\section{Introduction}
Weighted manifolds or manifolds with density is a Riemannian manifold $(M,g)$ having a function $f:M\to \mathbb{R}$, giving the measure $e^{-f}dV_g$, where $dV_g$ is the volume form \cite{Bal-Ozu-1}.

A weighted manifold $M$ is said to be a manifold with finite ends $M_1,M_2,M_3,\ldots,M_k$ if there exists a compact subset $C$ of $M$ such that $M\backslash C$ has $k$ connected components $P_1,P_2,P_3,\ldots,P_k$ where each $P_i$ is isometric to $M_i\backslash C_i$ for some compact subset $C_i$ of $M_i$ and $M_i$'s are non-compact, geodesically complete weighted manifolds of the same dimension. Each $P_i$(or $M_i$) will be considered as end of $M$. For more discussions on manifold with ends, see \cite{Grig_Ishi_Salo-1}.
\begin{figure}[h]
    \centering
    \includegraphics[width=0.7\linewidth]{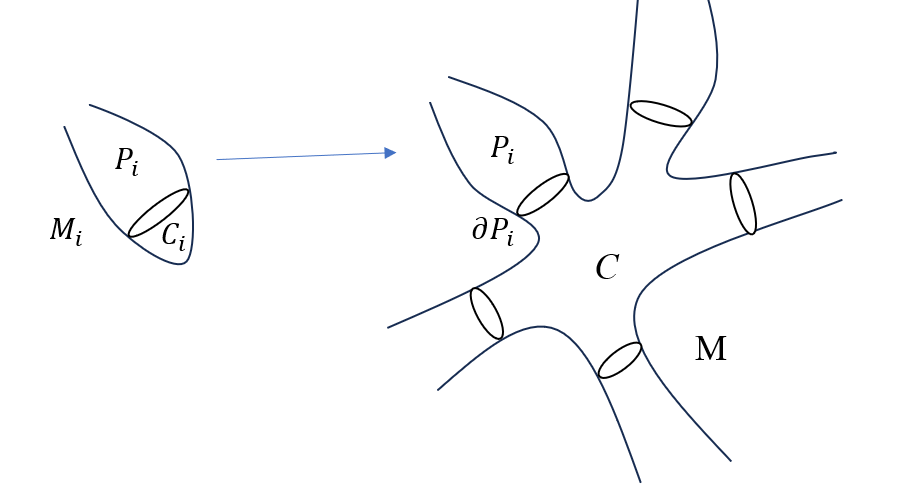}
    \caption{Manifold with ends}
   
\end{figure}

A manifold with cusps is a special type of manifold with ends where each end is homeomorphic (or diffeomorphic) to $S\times [0,\infty)$ with a warped product metric of the form $ds^2=dt^2+e^{-2t}g_S$ where $g_S$ is a flat metric on the compact cross-section $S$. Taking the transformation $y=e^t$, this metric can be written as $$ds^2=\frac{dy^2+g_S}{y^2}$$ As $t\to \infty$, presence of the factor $e^{-2t}$ ensures that the cross-sectional area decays exponentially and the volume of the cusp end becomes finite.

Let $X$ be a noncompact oriented Riemannian 4-manifold with cylindrical end all of the form $T^3\times [0,\infty)$, where $T^3$ is the 3-Torus. We can construct a hyperbolic metric on $T^3\times [0,\infty)$ as : $$g_{hyp}=dt^2+e^{-2t}g_{T^3}$$ where $g_{T^3}$ is a flat metric on $T^3$ and $t\in[0,\infty)$. A metric $g$ on $X$ is said to be asymptotically hyperbolic if $g$ is asymptotically $C^2$-close to $g_{hyp}$ for some flat metric $g_{T^3}$ on each end. For further study on the asymptotically hyperbolic metric of order $C^{m,\alpha}$, see \cite{Lee1}. In general, an asymptotically hyperbolic metric comes from a hyperbolic 4-manifold. Long-Reid \cite{Long-Reid1}  showed that the signature of all finite-volume hyperbolic 4-manifolds with $T^3$ cusps would be zero. Again, noncompact orientable Riemannian 4-manifolds with cylindrical ends appear as link complements. For a detailed study on link complements, see \cite{Iva1,Irt1,Sar1}. Anderson \cite{And1} also constructed many examples of asymptotically hyperbolic Einstein metrics via Dehn filling. This type of construction deals with the existence of asymptotically hyperbolic cusp metrics.

Let $(X,g)$ be a Riemannian manifold. Then $g$ is said to be Einstein metric if its Ricci tensor satifies the condition $$Ric_g=\lambda g$$ for some scalar $\lambda$. A breakthrough result for the Riemannian manifold is the Hitchin-Thorpe inequality, which states that if a compact, oriented 4-manifold $X$ possesses an Einstein metric $g$, then its Euler number $\chi(X)$ and signature $\sigma (X)$ satisfy the following inequality $$\chi(X)\geq \frac{3}{2}|\sigma(X)|.$$ This is a necessary condition for existence of Einstein metric for compact oriented 4-manifold but not sufficient. In the mid-1990s, LeBrun \cite{LEB1,LEB3} and Sambusetti \cite{SAMBU} independently showed the existence of a 4-manifold satisfying the Hitchin-Thorpe inequality, but admits no Einstein metrics. LeBrun \cite{LEB1,LEB3} constructed it using techniques from Seiberg-Witten theory.

Manifold with fibered geometry at infinity is a special case of a manifold with cusps and there are many generalisations of the Hitchin-Thorpe inequality for manifold with fibered geometry at infinity; for detailed observation, see \cite{KOT1,KOT2,GRO1,SAMBU}.  The asymptotic geometry at infinity is either a fiber bundle over a cone with a compact fiber, called a fibered boundary, or a cusp bundle over a compact space, called fibered cusps.  Dai-Wei \cite{DAI-WEI1} proved Hitchin-Thorpe inequality for Einstein noncompact 4-manifolds with specified geometry at infinity as follows:
\begin{theorem}
 \cite{DAI-WEI1} Let $(M^4,g)$ be a noncompact complete Einstein manifold which is asymptotic to a fibered cusp or a fibered boundary at infinity. In the fibered boundary case, we also assume that $dim\, F>0$. Then 
    $$\chi (M)\geq \frac{3}{2}\left|\sigma(M)+\frac{1}{2}a-\lim \eta\right|,$$ where $a-\lim \eta$ is the adiabatic limit of the eta invariant of $\partial \Bar{M}.$  Moreover, the equality holds iff $(M,g)$ is a complete Calabi-Yau manifold.
\end{theorem}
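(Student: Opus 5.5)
The plan is to follow the Hitchin--Thorpe strategy: write $\chi(M)$ and $\sigma(M)$ as curvature integrals plus boundary corrections, then use the Einstein condition to compare the two integrands pointwise.

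\textbf{Step 1: compactify and exhaust.} Compactify $M$ to a manifold with boundary $\bar M$. Exhaust $M$ by the compact pieces $M_r=\{\rho\le r\}$, where $\rho$ is the radial coordinate at infinity. By Chern--Gauss--Bonnet and the Atiyah--Patodi--Singer signature theorem,
$$\chi(M_r)=\frac{1}{8\pi^2}\int_{M_r}\left(|W^+|^2+|W^-|^2+\frac{s^2}{24}-\frac{|\mathring{Ric}|^2}{2}\right)dV+\int_{\partial M_r}Q,$$
$$\sigma(M_r)=\frac{1}{12\pi^2}\int_{M_r}\left(|W^+|^2-|W^-|^2\right)dV-\int_{\partial M_r}\Pi-\eta(\partial M_r).$$
Here $Q$ and $\Pi$ are the transgression forms of the Euler and $L$-forms with respect to the product-type metric.

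\textbf{Step 2: boundary terms as $r\to\infty$.} In the fibered boundary case the boundary $\partial M_r$ is a fibration $F\to\partial\bar M\to B$. The metric is $dr^2+r^2g_B+g_F$, so the base is scaled up relative to the fiber. This is exactly the adiabatic limit, and $\eta(\partial M_r)$ converges to $a\text{-}\lim\eta$. In the fibered cusp case the metric is $r^{-2}dr^2+r^{-2}g_B+g_F$-type, which again gives an adiabatic rescaling. The assumption $\dim F>0$ in the fibered boundary case excludes the asymptotically conical situation, where the signature defect is not captured by $a\text{-}\lim\eta$ alone.

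Next we show that the transgression terms $\int_{\partial M_r}Q$ and $\int_{\partial M_r}\Pi$ tend to zero. They are built from the second fundamental form and curvature of $\partial M_r$. Under the fibered scaling, and with $\dim F>0$ so that the volume of the boundary slices grows slowly enough, these integrals decay. The only leftover contribution is the factor $\tfrac12$ in front of $a\text{-}\lim\eta$. It arises because the APS formula is applied on a manifold whose boundary is a limit of slices; following Dai's computation, the boundary contributes $-\tfrac12 a\text{-}\lim\eta$ after the combined transgression and eta limits are accounted for. The result is
$$\chi(M)=\frac{1}{8\pi^2}\int_M\left(|W|^2+\frac{s^2}{24}-\frac{|\mathring{Ric}|^2}{2}\right),\qquad \sigma(M)+\frac12 a\text{-}\lim\eta=\frac{1}{12\pi^2}\int_M\left(|W^+|^2-|W^-|^2\right).$$
Both integrals converge because the curvature is asymptotically controlled by the model metric.

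\textbf{Step 3: Einstein inequality.} For an Einstein metric $\mathring{Ric}=0$, so
$$\chi\mp\frac32\left(\sigma+\frac12 a\text{-}\lim\eta\right)=\frac{1}{8\pi^2}\int_M\left(2|W^\mp|^2+\frac{s^2}{24}\right)\ge0.$$
This gives the stated inequality. Equality forces $s=0$ and $W^\mp=0$, so the metric is Ricci-flat with one half of the Weyl tensor vanishing, hence locally hyperkähler. Passing to a cover if necessary, $(M,g)$ is then Calabi--Yau. Conversely, a complete Calabi--Yau metric of this type gives equality.

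\textbf{Main obstacle.} The hardest step is Step 2: proving that the transgression forms vanish in the limit and identifying the eta contribution, with the exact coefficient $\tfrac12$, as the adiabatic limit. I would rely on the Dai--Bismut–Cheeger adiabatic limit theorem and on explicit asymptotics of the second fundamental form of $\partial M_r$.
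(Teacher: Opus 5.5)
\textbf{Verdict: genuine gap.} The paper gives no proof of this statement; it is quoted from Dai--Wei. Your architecture is nevertheless the standard one, and it is the machinery the paper itself uses in Section~3: Gauss--Bonnet and APS on the truncations with Chern--Simons corrections \eqref{EQ-3.3}--\eqref{EQ-3.4}, Lemma~\ref{LM-3.1} to pass from the actual metric to the model, $\epsilon\to0$, and then the pointwise Hitchin--Thorpe comparison. Your Step~3 does that comparison correctly. All the content is in Step~2, and several of your claims there are wrong.

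\emph{The factor $\tfrac12$.} It is not a leftover of ``combined transgression and eta limits''; it is a normalization. In the statement's convention, $\eta$ is the eta invariant of the odd signature operator on all forms, which is twice APS's eta on even forms. APS then reads $\sigma(M_r)=\int_{M_r}L-\tfrac12\eta(\partial M_r)-\int_{\partial M_r}Q_\sigma$, as in \eqref{EQ-3.4}. Your Step~1 formula has $-\eta(\partial M_r)$, so with vanishing transgressions it would give $\sigma+a\text{-}\lim\eta$. Your Steps~1 and~2 therefore contradict each other.

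\emph{The cusp model.} You have swapped base and fiber. From \eqref{EQ-2.1} with $r=1/x$, the model is $r^{-2}dr^2+g_B+r^{-2}g_F$. Its slices are homothetic to $r^2g_B+g_F$, which is adiabatic, and $\eta$ is scale invariant. Your version rescales to $g_B+r^2g_F$, which is not adiabatic.

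\emph{The role of $\dim F>0$.} It has nothing to do with the signature. For a cone, $\eta(\partial M_r)=\eta(\partial\bar M)$ and the $L$-transgression still vanishes; this is why Theorem~1.2 contains exactly $\sigma(M)+\tfrac12\eta(\partial\bar M)$. What survives is the Euler transgression, $\frac{1}{2\pi^2}\mathrm{vol}(\partial\bar M)+\alpha(\partial\bar M)$ by \eqref{EQ-36}. For example, flat $\mathbb{R}^4$ has $\int\mathrm{Pf}=0$ but $\chi=1$.

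More seriously, $\lim\int_{\partial M_r}Q_\chi=0$ is not a volume-growth effect. It is false for general fibered boundary metrics with $\dim F>0$, so an argument like yours, which never uses the Einstein condition, cannot prove it.
\begin{itemize}
\item In the model, the slice has second fundamental form $r^{-1}$ on horizontal unit vectors and $0$ on vertical ones. So $\xi^a$ is the pulled-back base coframe $\theta^a$ for horizontal $a$ and vanishes otherwise.
\item If $\dim B=2$, the cubic term needs three horizontal directions and vanishes identically. The term $\xi\wedge\Omega$ only sees mixed horizontal--vertical curvature, which decays.
\item If $\dim B=1$ and $\dim F=2$, then $\xi\wedge\Omega\to d\theta\wedge K_F\,dA_F$. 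Its integral is a universal nonzero constant times $\mathrm{length}(B)\,\chi(F)$.
\end{itemize}
Flat $\mathbb{R}^2\times$ round $S^2$ is exactly of this last type. Its slice volumes grow only linearly, yet the Euler correction is $\pm\chi=\pm2$ on every slice, because $\int_{M_r}\mathrm{Pf}=0$.

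In the theorem this term vanishes only because of the Einstein hypothesis. The model has $\mathrm{Ric}(\partial_r,\partial_r)\to0$, which forces $\lambda=0$. Then the fibers are Ricci-flat, hence flat, so $\chi(F)=0$. In the cusp case the decay is genuine: $\xi$ lives on the fiber directions, whose coframe carries a factor $e^{-t}$.

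To close the gap, Step~2 needs a case analysis of $Q_\chi$ by $\dim B$ that uses the Einstein condition at infinity, in place of your heuristic. It also needs Lemma~\ref{LM-3.1}, and its analogue for $\eta$, to pass from the model metric to $g$.
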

The adiabatic limit of the eta invariant of $\partial \Bar{M}$ tells about many geometric and topological properties of fibered geometry at infinity \cite{DAI1}. Again, if the fiber is a single point, Dai-Wei \cite{DAI-WEI1} proved another inequality as follows:
\begin{theorem}
  \cite{DAI-WEI1} Let $(M,g)$ be a complete Einstein four-manifold which is asymptotic to a cone over $(\partial \Bar{M},g_{\partial \Bar{M}})$. Then 
    \begin{eqnarray*}
        2\chi(M) \geq 3\left| \frac{1}{2}\eta(\partial\Bar{M})+ \sigma(M) \right|+\frac{1}{\pi^2}vol(\partial\Bar{M}+2\alpha(\partial \Bar{M})
    \end{eqnarray*}
    where $\eta(\partial \Bar{M})$ is the eta invariant of $(\partial \Bar{M},g_{\partial \Bar{M}})$ and $\alpha(\partial \Bar{M})$ a geometric invariant defined by $$\alpha(\partial \Bar{M})=\frac{1}{8\pi^2}\int_{\partial \Bar{M}}\epsilon_{abc}\omega^a \wedge\left(\Omega_c^b-\omega^b\wedge \omega^c\right)$$, $\Bar{M}=M\cup \partial \Bar{M}$ is the compactification of $M$, $\omega^a$ is the dual 1-forms of an orthonormal basis on $\partial \Bar{M}$, and $\Omega^b_c$ is the curvature form of $\partial \Bar{M}$ w.r.t the orthonormal basis . Moreover, the equality holds iff $M$ is an asymptotically conical Calabi-Yau manifold.
\end{theorem}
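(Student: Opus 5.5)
The plan is the classical Hitchin–Thorpe argument, carried out on a large truncation of $M$ and completed by a boundary limit. Take the cone coordinate $r$ on the end, so $g$ is asymptotic to $dr^2+r^2g_{\partial\Bar M}$, and set $M_R=\{r\le R\}$. This is a compact manifold with boundary $Y_R\cong\partial\Bar M$. Its induced metric is asymptotic to $R^2 g_{\partial\Bar M}$, and its second fundamental form is asymptotic to $R^{-1}$ times the induced metric. On $M_R$ we apply the Chern–Gauss–Bonnet formula with the Chern boundary transgression term, and the Atiyah–Patodi–Singer signature theorem:
\begin{align*}
\chi(M_R)&=\frac{1}{8\pi^2}\int_{M_R}\Big(|W^+|^2+|W^-|^2+\frac{s^2}{24}-\frac{|\mathring{Ric}|^2}{2}\Big)dV+\int_{Y_R}\Pi_\chi,\\
\sigma(M_R)&=\frac{1}{12\pi^2}\int_{M_R}\big(|W^+|^2-|W^-|^2\big)dV-\int_{Y_R}\Pi_\sigma-\eta(Y_R).
\end{align*}
Here $\Pi_\chi,\Pi_\sigma$ are the transgression forms in the connection and second fundamental form. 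Since $g$ is Einstein, $\mathring{Ric}=0$. Combining the two formulas gives
$$2\chi(M_R)\mp3\sigma(M_R)=\frac{1}{4\pi^2}\int_{M_R}\Big(2|W^\mp|^2+\frac{s^2}{24}\Big)dV+(\text{boundary terms}).$$
The bulk integrand is nonnegative.

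Next I would pass to the limit $R\to\infty$. Topologically $\chi(M_R)=\chi(M)$ and $\sigma(M_R)=\sigma(M)$ for large $R$. The eta invariant is scale invariant, so $\eta(Y_R)\to\eta(\partial\Bar M)$ as the rescaled metric converges. This yields the term $3|\tfrac12\eta(\partial\Bar M)+\sigma(M)|$ after fixing orientation and sign conventions. The sign must come out as written, which corresponds to the convention $\sigma=\ldots-\eta$ with a factor $\tfrac12$ from the double-counting normalization Dai–Wei use. The Einstein constant must be $\lambda=0$, since a complete manifold with $\lambda>0$ is compact and $\lambda<0$ is incompatible with cone asymptotics. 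Hence $s=0$, and the bulk term reduces to $\frac{1}{2\pi^2}\int|W^\mp|^2\ge0$.

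The main work is computing the limit of the boundary terms. For the exact cone $dr^2+r^2 h$ with $h=g_{\partial\Bar M}$, the connection one-forms restricted to $Y_R$ split into $\omega^a$, the normal part coming from the second fundamental form, and the intrinsic Levi-Civita part of $h$. Substituting into $\int_{Y_R}\Pi_\chi$ gives a scale-invariant expression, the Chern transgression $\frac{1}{8\pi^2}\int\epsilon_{abc}\,\omega^a\wedge(\Omega^b_c-\tfrac13\omega^b\wedge\omega^c)$ up to normalization. Rewriting it as $\alpha(\partial\Bar M)$ plus a multiple of $\int\epsilon_{abc}\,\omega^a\wedge\omega^b\wedge\omega^c$ produces the volume term $\frac{1}{\pi^2}vol(\partial\Bar M)$, using $\epsilon_{abc}\,\omega^a\omega^b\omega^c=6\,dvol$. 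The transgression of $\Pi_\sigma$ should vanish for the conic second fundamental form, which is umbilic. The $\pm$ cases together then give the absolute value. I expect this step to be the main obstacle. The difficulty is getting constants and signs exactly right, and showing that the deviation of $g$ from the exact cone contributes $o(1)$. For the latter I would assume a decay rate of order $O(r^{-\epsilon})$ with matching derivative decay; the transgression integrals are then continuous in $C^1$, which suffices.

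Finally, for the equality case, equality forces $W^\mp\equiv0$ on $M$ for the relevant sign, with $s=0$ and Ricci-flatness. A Ricci-flat metric with $W^-=0$ (say) has $\Lambda^+$ flat, by the curvature decomposition. On a simply connected end this gives a parallel hyperkähler, or at least Kähler, structure up to a finite cover, so $M$ is an asymptotically conical Calabi–Yau manifold. Conversely, a Calabi–Yau metric has $s=0$ and one half of the Weyl tensor zero, so equality holds.
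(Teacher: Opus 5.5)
Your proposal is correct and follows the route the paper relies on: the paper cites Dai--Wei for this result and reuses the same scheme in proving its Theorem 3.3. That scheme truncates the end, applies Gauss--Bonnet--Chern and APS with Chern--Simons corrections, passes to the limit using $\lim_{\epsilon\to0}\int_{\partial M_\epsilon}Q_\chi=\frac{1}{2\pi^2}\mathrm{vol}(\partial\Bar{M})+\alpha(\partial\Bar{M})$ together with the vanishing of the signature correction, and discards the nonnegative $|W^\mp|^2$ term (your boundary-constant bookkeeping checks out). One small slip in the equality case: Ricci-flat with $W^-=0$ makes $\Lambda^-$, not $\Lambda^+$, flat, and the finite cover should come from finiteness of $\pi_1(M)$ for Ricci-flat manifolds of Euclidean volume growth rather than from any property of the end.
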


Goal of this paper is to find a new form of the inequality on manifolds with fibered geometry at infinity using earlier work of Dai-Wei \cite{DAI-WEI1} and Seiberg-Witten gauge theoretic techniques.

\section{Prelimineries}

In this section, we discuss fibered geometry at infinity, Euler characteristics, signature, $ L^2$-cohomology, $L^2$-Chern-Weil theory, and Seiberg-Witten equations.

\subsection{Fibered Geometry at Infinity}
Let $(M,g)$ be a complete noncompact Riemannian $n$-manifold with finite topological type and $\Bar{M}=M \cup \partial\Bar{M}$ be its compactification. Let $$F\to \partial \Bar{M} \xrightarrow{p} B$$ be a fibration structure on the boundary, where $B, F$ are closed manifolds. Let $[0,\epsilon)\times \partial \Bar{M}$ be a trivialization of a neighbourhood of $\partial \Bar{M}$, where $x\in C^{\infty}(\Bar{M})$ is a coordinate in $[0,\epsilon)$ with $x=0$ on $\partial\Bar{M}$ and $x>0$ in $M$; also $dx$ is nowhere vanishing on $\partial\Bar{M}.$ The metric $g$ is said to be asymptotic to a fibered cusp if its local form near the boundary $\partial \Bar{M}$ takes the form 
\begin{equation}\label{EQ-2.1}
    g \sim \frac{dx^2}{x^2}+p^*g_{B}+x^2g_{F}
\end{equation}
where $g_F$ is a family of metrics along the fibers and $g_B$ is a metric on the base manifold $B$. Putting $x=e^{-t}$ in (\ref{EQ-2.1}), we get 
\begin{equation}\label{EQ-2.2}
    g\sim dt^2+p^*g_B+e^{-2t}g_F.
\end{equation}
Thus we observe that for the fibered cusp case fibers shrink to zero, but the base remains fixed as $x\to 0.$

The metric $g$ is said to be asymptotic to a fibered boundary metric if its local form near the boundary takes the form 
\begin{equation}\label{EQ-2.3}
    g\sim \frac{dx^2}{x^4}+\frac{p^*g_B}{x^2}+g_F.
\end{equation}
Putting $x=\frac{1}{t}$ in (\ref{EQ-2.3}), we get
\begin{equation}\label{EQ-2.4}
    g\sim dt^2+t^2p^*g_B+g_F.
\end{equation}
For this case, we observe that fiberes remain bounded, but the base expands as $x\to 0$. For a detailed discussion, see \cite{GRI-SAN-VER,DAI-WEI1,VAL1}.

Now, we consider a Lie algebra of vector fields on $\Bar{M}$ as follows:
$$^{\phi}\mathcal{V}(\Bar{M})=\{X\in T(\Bar{M}): X \text{ is tangent to the fibers at the boundary, and }X(x)=O(x^2) \}.$$

This $^{\phi}\mathcal{V}(\Bar{M})$ induces a vector bundle structure $^{\phi}T\Bar{M}$ on $\Bar{M}$ such that $$^{\phi}\mathcal{V}(\Bar{M})=\Gamma (^{\phi}T\Bar{M})$$ and local frame of $^{\phi}T(\Bar{M})$ near $\partial \Bar{M}$ is given by $x^2\partial_x,x\partial_y,\partial_z$ where $y,z$ are local coordinates of base $B$ and fiber $F$ respectively. Again, $$\text{End}(^{\phi}T\Bar{M})|_M \cong \text{End}(\text{TM})$$ and different isomorphisms differ by conjugation, see \cite{VAL1}. This canonically identifies invariant polynomials, such as trace polynomials.

A metric $g_1$ is said to be asymptotic to a fiber-boundary metric if 
\begin{equation}\label{EQ-2.5}
    g_1=g+xs
\end{equation}
where $g$ is a fiber-boundary metric as defined in (\ref{EQ-2.3}) and $s$ is a smooth symmetric two-tensor i.e.  $s\in \mathcal{S}^2(^{\phi}T\Bar{M})$ with $s(x^2\partial_x,\cdot)\equiv 0$, where $\mathcal{S}^2$ denotes the space of symmetric tensors.

The following proposition states about the nature of Levi-Civita connection for a metric asymptotic to the fibered boundary metric.
\begin{proposition}
    The Levi-Civita connection for a metric asymptotic to the fibered boundary metric is a true connection, i.e., $$\nabla ^{\phi}:\Gamma(^{\phi}T\Bar{M})\to \Gamma (T^{*}\Bar{M}\otimes ^{\phi}T\Bar{M}).$$
    Moreover, $$R^{\phi}\in\Gamma(\Lambda ^2T^*\Bar{M}\otimes \text{End}(^{\phi}T\Bar{M})).$$
\end{proposition}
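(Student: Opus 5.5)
The plan is to work directly in the local frame of $^{\phi}T\Bar{M}$ near $\partial\Bar{M}$ and reduce the statement to the Koszul formula. Fix the frame $e_0=x^2\partial_x$, $e_i=x\partial_{y_i}$, $e_\alpha=\partial_{z_\alpha}$, where $y$ are base coordinates and $z$ fiber coordinates. By (\ref{EQ-2.3}) and (\ref{EQ-2.5}), the metric $g_1$ has components $g_1(e_a,e_b)$ that are smooth functions on $\Bar{M}$ up to $x=0$. Since $g_1$ is $g$ plus $x s$, and at $x=0$ it is positive definite, the matrix is invertible in a neighbourhood of the boundary, with smooth inverse. So $g_1$ is a genuine smooth bundle metric on $^{\phi}T\Bar{M}$.

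The key structural input is that $^{\phi}\mathcal{V}(\Bar{M})$ is a Lie algebra. I would check this directly on the frame. We have $[e_0,e_i]=x e_i$ and $[e_0,e_\alpha]=0$. The bracket $[e_i,e_j]=0$, and the bracket $[e_i,e_\alpha]$ lies in $\Gamma(^{\phi}T\Bar{M})$; for a nontrivial fibration one first straightens coordinates so that the fiber directions are $\partial_z$, and the brackets then pick up smooth coefficients, possibly with an extra factor of $x$. Also, every $X\in{}^{\phi}\mathcal{V}$ acts on smooth functions, so $X(g_1(Y,Z))$ is smooth.

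With these two facts, the Koszul formula
$$2g_1(\nabla_XY,Z)=Xg_1(Y,Z)+Yg_1(X,Z)-Zg_1(X,Y)+g_1([X,Y],Z)-g_1([X,Z],Y)-g_1([Y,Z],X)$$
shows that for $X,Y,Z\in{}^{\phi}\mathcal{V}$ the quantity $g_1(\nabla_XY,Z)$ is smooth up to the boundary. Inverting $g_1$ then gives $\nabla_XY\in{}^{\phi}\mathcal{V}$.

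This is only a connection differentiating in $\phi$-directions. To obtain a true connection $\nabla^{\phi}:\Gamma(^{\phi}T\Bar{M})\to\Gamma(T^*\Bar{M}\otimes{}^{\phi}T\Bar{M})$, we must show that $\nabla_{\partial_x}Y$ and $\nabla_{\partial_y}Y$ are also smooth sections. Equivalently, the connection coefficients $g_1(\nabla_{e_a}e_b,e_c)$ must vanish to the orders $O(x^2)$ for $a=0$ and $O(x)$ for $a=i$; the direction $a=\alpha$ needs no extra vanishing.

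I expect this step to be the main obstacle. My approach is to compute the Koszul terms explicitly. For the exact model $g$, the frame is orthonormal, so the first three terms vanish and only the bracket terms survive. In the $e_0$ direction, $[e_0,e_i]=x e_i$ contributes $x\,\delta_{ij}$. This is only $O(x)$, not $O(x^2)$: the pure scaling part of $\nabla_{e_0}$ is the identity-like term $x\cdot\mathrm{Id}$ on base directions. I would handle it by noting that this term is exactly compatible with rescaling, since $\nabla_{\partial_x}(x\partial_y)=\partial_y\cdot(\ldots)$. Alternatively, I would check directly in the coordinate basis that $\nabla_{\partial_x}e_i=x^{-1}\nabla_{e_0}e_i\cdot x^{-1}$ becomes smooth, because the leading term cancels against $\partial_x(x)\partial_y$. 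Concretely, I will verify $\nabla_{\partial_x}(x\partial_y)=\partial_y+x\nabla_{\partial_x}\partial_y$ and use $\Gamma^{y}_{xy}=-1/x$ for the cone metric, so the result is $O(1)$ times $x\partial_y$.

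The perturbation $xs$ with $s(e_0,\cdot)=0$ contributes terms carrying an extra power of $x$, which absorbs the remaining weights. Finally, the curvature statement follows formally. We have $R^{\phi}=(\nabla^{\phi})^2$ for a smooth connection on a smooth bundle over $\Bar{M}$, so $R^{\phi}$ is a smooth section of $\Lambda^2T^*\Bar{M}\otimes\mathrm{End}(^{\phi}T\Bar{M})$. On the interior it agrees with the Riemann curvature via the identification $\mathrm{End}(^{\phi}T\Bar{M})|_M\cong\mathrm{End}(TM)$.
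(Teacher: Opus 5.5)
\textbf{There is a genuine gap at the decisive step.} The paper gives no argument here; it defers to Vaillant. Your framework is the right one: the Koszul formula in the frame $e_0=x^2\partial_x$, $e_i=x\partial_{y_i}$, $e_\alpha=\partial_{z_\alpha}$, reducing the claim to $g_1(\nabla_{e_0}e_b,e_c)=O(x^2)$ and $g_1(\nabla_{e_i}e_b,e_c)=O(x)$. But the key step is mishandled.

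Your claim that $[e_0,e_i]=xe_i$ puts a term $x\delta_{ij}$ into $g(\nabla_{e_0}e_i,e_j)$ is false. In the Koszul formula the brackets enter as $g([e_0,e_i],e_j)-g([e_0,e_j],e_i)-g([e_i,e_j],e_0)=xg_{ij}-xg_{ji}=0$. The scaling term actually sits in $\nabla_{e_i}e_0=-xe_i$, which is the $e_i$-direction, where $O(x)$ is allowed. Torsion-freeness then gives $\nabla_{e_0}e_i=\nabla_{e_i}e_0+[e_0,e_i]=0$.

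Had that term existed, the proposition would be false. Since $\nabla_{\partial_x}e_i=x^{-2}\nabla_{e_0}e_i$ exactly, you would get $x^{-1}e_i$. There is no separate Leibniz term $\partial_x(x)\partial_y$ to cancel it, because that term is already contained in $\nabla_{e_0}e_i$. Your coordinate check $\nabla_{\partial_x}(x\partial_y)=\partial_y-\partial_y=0$ is correct, but it refutes your preceding claim rather than repairing it. It also only treats $\nabla_{\partial_x}e_i$ for the exact cone, and you never verify the $e_i$-direction at all.

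The perturbation step is only asserted. The mechanism you give (``an extra power of $x$'') does not suffice in the $e_0$-direction, where $O(x^2)$ is needed. Here is what works. In coordinates $(x,y,z)$ all frame brackets vanish except $[e_0,e_i]=xe_i$. Write $g_1=g_0+x\tilde s$, where $g_0(e_0,e_0)=1$, $g_0(e_i,e_j)=g_{B,ij}(y)$, $g_0(e_\alpha,e_\beta)=g_{F,\alpha\beta}(y,z)$, all other entries are $0$, and $\tilde s$ is smooth with $\tilde s(e_0,\cdot)=0$. The cross terms of the exact metric, e.g.\ $g(e_i,e_\alpha)=x\,g_F(\partial_{y_i},\partial_{z_\alpha})$, are of this form, so nontrivial fibrations need no extra care.

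Now go through $2g_1(\nabla_{e_0}e_b,e_c)$ term by term:
\begin{itemize}
\item $e_0g_1(e_b,e_c)=x^2\partial_x(x\tilde s_{bc})=O(x^2)$.
\item $e_bg_1(e_0,e_c)$ and $e_cg_1(e_0,e_b)$ vanish because $g_1(e_0,\cdot)$ is constant. This is exactly where $s(e_0,\cdot)=0$ enters. Without it you get $x(\partial_{z_\alpha}s_{0\beta}-\partial_{z_\beta}s_{0\alpha})$, which is only $O(x)$, and $\nabla_{\partial_x}e_\alpha$ blows up.
\item The bracket terms reduce to $x(\epsilon_b-\epsilon_c)g_1(e_b,e_c)$, with $\epsilon=1$ on base indices and $0$ otherwise. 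Hence they are $0$ or $\pm x^2\tilde s_{i\alpha}$.
\item $g_1([e_b,e_c],e_0)=0$.
\end{itemize}

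In the $e_i$-direction every term is $O(x)$. This holds because $e_i$ and all brackets carry a factor $x$, $e_0g_1=O(x^2)$, and $e_\alpha g_1(e_i,\cdot)=O(x)$. The last point holds precisely because $g_0(e_i,\cdot)$ is independent of $z$, i.e.\ $g_B$ is pulled back from $B$. Note that the frame is not orthonormal in general. What controls the derivative terms is these structural facts, not orthonormality. With this in place, your curvature argument goes through.
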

For detailed proof, see \cite{VAL1}.
\subsection{Characteristic and Signature} 
For an oriented Riemannian bundle $E\to M$ of rank $2m$, the global class $e(E)=[\text{Pf}(\frac{\Omega}{2\pi})]\in H^{2m}(M)$ is called the \textbf{Euler class} of the oriented Riemannian bundle $E$, where $\text{Pf}(X)$ is the $\textbf{Pfaffian}$ polynomial of degree $m$ and $\Omega$ is the curvature matrix relative to a positively oriented orthonormal frame of $E\to M$. We get the generalized Gauss-Bonnet theorem as follows:
\begin{theorem}
    Let $M$ be a compact oriented Riemannian manifold of dimension 2m, and $\nabla$ be a metric connection on the tangent bundle $TM$ with curvature matrix $\Omega$ relative to a positively oriented orthonormal frame. Then $$\chi (M)=\int_M e(TM)=\int_M \text{Pf} \left(\frac{1}{2\pi}\Omega\right).$$
\end{theorem}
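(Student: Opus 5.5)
The natural route is the classical Chern--Weil argument, combining the Poincar\'e--Hopf theorem with a transgression (Chern--Simons) comparison between connections. First I show that $\int_M \mathrm{Pf}\left(\frac{1}{2\pi}\Omega\right)$ does not depend on the choice of metric connection $\nabla$ on $TM$. Then I identify this common value with $\chi(M)$ by computing it for a convenient connection adapted to a vector field with isolated zeros.

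For the independence step, let $\nabla^0,\nabla^1$ be two metric connections and set $\nabla^s=(1-s)\nabla^0+s\nabla^1$. Each $\nabla^s$ is again metric, since the difference $A=\nabla^1-\nabla^0$ is an $\mathfrak{so}(2m)$-valued $1$-form. The Pfaffian is an $\mathrm{Ad}$-invariant polynomial on $\mathfrak{so}(2m)$, so the usual transgression formula gives
\begin{equation*}
\mathrm{Pf}\left(\tfrac{1}{2\pi}\Omega^1\right)-\mathrm{Pf}\left(\tfrac{1}{2\pi}\Omega^0\right)=d\,T\mathrm{Pf}(\nabla^0,\nabla^1),
\end{equation*}
where
\begin{equation*}
T\mathrm{Pf}=m\int_0^1 \mathrm{Pf}\left(\tfrac{1}{2\pi}A,\tfrac{1}{2\pi}\Omega^s,\dots,\tfrac{1}{2\pi}\Omega^s\right)ds
\end{equation*}
is built from the polarization of $\mathrm{Pf}$. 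The invariance also makes the form independent of the choice of positively oriented orthonormal frame, so it is globally defined. Since $M$ is closed, Stokes' theorem shows that the integral does not depend on $\nabla$. In particular we may take $\nabla$ to be the Levi-Civita connection.

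For the identification step, I choose a vector field $V$ with isolated nondegenerate zeros $p_1,\dots,p_k$. By Poincar\'e--Hopf, $\sum_i \mathrm{ind}_{p_i}V=\chi(M)$. On $M\setminus\{p_i\}$ I use Chern's construction. Pass to the unit sphere bundle $\pi:SM\to M$; there the pullback of the Euler form is exact, $\pi^*\mathrm{Pf}\left(\frac{1}{2\pi}\Omega\right)=-d\Pi$, for an explicit $(2m-1)$-form $\Pi$ whose restriction to each fiber $S^{2m-1}$ integrates to $1$. Let $s=V/|V|$, a section of $SM$ away from the zeros. Stokes' theorem on $M$ minus small balls $B_\varepsilon(p_i)$ gives
\begin{equation*}
\int_M \mathrm{Pf}\left(\tfrac{1}{2\pi}\Omega\right)=\lim_{\varepsilon\to0}\sum_i\int_{\partial B_\varepsilon(p_i)}s^*\Pi .
\end{equation*}
As $\varepsilon\to0$, only the fiber-volume part of $\Pi$ survives, and the $i$-th boundary term tends to the degree of $s|_{\partial B_\varepsilon(p_i)}\to S^{2m-1}$, which is $\mathrm{ind}_{p_i}V$. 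Summing over $i$ yields $\chi(M)$.

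The main obstacle is constructing the transgression form $\Pi$ on $SM$ and checking its normalization, namely that its fiber integral is exactly $1$ with the factor $(2\pi)^{-m}$ built into the Pfaffian. This requires an explicit computation in a local frame with $e_{2m}=s$. I would handle it by the standard Chern computation and verify the constant on the round sphere $S^{2m}$, where $\chi=2$. If that step becomes unwieldy, the fallback is the heat-kernel (McKean--Singer) proof applied to $d+d^*$ on $\Lambda^\bullet T^*M$, where the local index density is the Pfaffian by Getzler rescaling.
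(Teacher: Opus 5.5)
The paper gives no proof of this statement. It is quoted as classical background with references to Milnor--Stasheff and Tu. Because the paper \emph{defines} $e(E)=[\mathrm{Pf}(\frac{\Omega}{2\pi})]$, the middle equality is tautological, and the real content is $\chi(M)=\int_M \mathrm{Pf}(\frac{1}{2\pi}\Omega)$ for closed $M$.

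Your argument is Chern's intrinsic proof, and it is correct.

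\textbf{Independence of the connection.} This step is exactly the transgression identity (\ref{EQ-31}) that the paper later uses for its Chern--Simons correction terms, specialized to $P=\mathrm{Pf}$. Your sign is right: $\frac{d}{ds}\Omega^s=d^{\nabla^s}A$, and the Bianchi identity kills the remaining terms. Only $SO(2m)$-invariance of $\mathrm{Pf}$ is needed, and the orientation supplies it.

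\textbf{Calibration on the round sphere.} This is legitimate, because the restriction of Chern's $\Pi$ to a fiber only sees the standard structure of $S^{2m-1}\subset\mathbb{R}^{2m}$. So your argument yields $\int_M\mathrm{Pf}(\frac{1}{2\pi}\Omega)=c_m\,\chi(M)$ with $c_m$ universal. On $S^{2m}$ one has $\mathrm{Pf}(\frac{1}{2\pi}\Omega)=\frac{(2m)!}{(4\pi)^m m!}\,d\mathrm{vol}$ and $\mathrm{vol}(S^{2m})=\frac{2(4\pi)^m m!}{(2m)!}$. The integral is therefore $2=\chi(S^{2m})$, which forces $c_m=1$.

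\textbf{The limit $\varepsilon\to 0$.} Write out why only the fiber part of $\Pi$ survives. A term of $\Pi$ with $k\geq 1$ base directions pulls back under $s$ to something of size $O(\varepsilon^{-(2m-1-k)})$. It is integrated over a sphere of area $O(\varepsilon^{2m-1})$, so it contributes $O(\varepsilon^k)$.

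\textbf{Comparison of the two routes.} The cited route identifies the Chern--Weil class $[\mathrm{Pf}(\frac{\Omega}{2\pi})]$ with the topological Euler class of an arbitrary oriented even-rank bundle, and then uses $\langle e(TM),[M]\rangle=\chi(M)$. That gives a stronger statement at the level of bundles. Yours never touches the topological Euler class and is self-contained for $TM$ via Poincar\'e--Hopf. It is also built from the same sphere-bundle transgression mechanism that produces the boundary quantities $Q_\chi$ and $\alpha(\partial\Bar{M})$ used later in the paper.
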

For $2m=4$, we explicitly get
\begin{eqnarray}
   \nonumber \chi(M) & = & \frac{1}{32\pi^2}\int_M \epsilon_{abcd}\Omega_{ab}\wedge\Omega_{cd} \\
  \label{EQ-26} & = & \frac{1}{8\pi^2}\int_M \left(|W|^2 - |Z|^2+\frac{1}{24}S^2\right)d\text{vol}.
\end{eqnarray}
Here $\epsilon_{abcd}$ is the sign of the permutation $\rho$ s.t. $\rho(1)=a,\ldots,\rho(4)=d,$ $W$ is the Weyl curvature such that $|W|^2=|W^+|^2+|W^-|^2$ for Einstein manifolds, here $W^+,\, W^-$ represent self-dual and anti-self dual components of the Weyl curvature tensor $W$, $Z$ the traceless Ricci, and $S$ the scalar curvature \cite{MIL-STA1, TU1}.

On a $4n-$dimensional compact oriented manifold $M$, a symmetric bilinear pairing $$\langle ,\rangle:H^{2n}(M;\mathbb{R})\times H^{2n}(M;\mathbb{R}) \to H^{4n}(M;\mathbb{R})\cong \mathbb{R}, $$ is called the \textbf{intersection form}. This intersection form can be represented by a symmetric matrix for which all of its eigenvalues are real. If $ b^+$ and $ b^-$ are respectively positive and negative eigenvalues of the symmetric matrix, then $\sigma(M):=b^+-b^-$ is called the \textbf{signature} of $M$.

In 1953, Hirzebruch established a formula for the signature as follows:
\begin{theorem}[Hirzebruch signature formula]
   Let $M$ be a compact oriented smooth manifold of dimension $4n$. Then its signature $\sigma(M)$ is given by $$\sigma(M)=\int_M L_n(p_1,\ldots,p_n),$$ where the $L_n$'s are $L-$polynomials and $p_i$'s are Pontrjagin classes.
\end{theorem}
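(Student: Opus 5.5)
The plan is to follow the standard proof of the Hirzebruch signature formula: reduce it to a universal check on a small family of manifolds. Both sides are oriented cobordism invariants and additive under disjoint union, and both are multiplicative on products, so the identity only has to be verified on generators of the rational oriented cobordism ring.

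The first step is to show that both sides give ring homomorphisms $\Omega^{SO}_*\otimes\mathbb{Q}\to\mathbb{Q}$. For the signature, cobordism invariance is Thom's result. If $M=\partial W$ with $W$ compact and oriented, the image of $H^{2n}(W;\mathbb{R})\to H^{2n}(M;\mathbb{R})$ is a Lagrangian for the intersection form. Here one uses Poincaré–Lefschetz duality and the long exact sequence of the pair $(W,M)$. Hence $\sigma(M)=0$. Multiplicativity, $\sigma(M\times N)=\sigma(M)\sigma(N)$, follows from the Künneth formula. In the middle degree, the terms $H^p(M)\otimes H^{q}(N)$ with $p$ not equal to $\dim M/2$ pair off into hyperbolic summands and contribute zero.

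For the right-hand side, Pontrjagin numbers are cobordism invariants by Stokes' theorem. If $M=\partial W$, then $TW|_M=TM\oplus\underline{\mathbb{R}}$, so $p_I(M)$ is the restriction of $p_I(W)$. Therefore $\int_M p_I(M)=\int_W d(\cdot)=0$ for a closed form representing $p_I(W)$. Multiplicativity requires the $L$-polynomials to form a multiplicative sequence with characteristic power series $Q(z)=\sqrt z/\tanh\sqrt z$. This is how the $L_n$ are defined, together with $p(TM\times TN)=p(TM)p(TN)$ modulo 2-torsion.

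Next I would invoke Thom's computation that $\Omega^{SO}_*\otimes\mathbb{Q}=\mathbb{Q}[\mathbb{CP}^2,\mathbb{CP}^4,\ldots]$. Two homomorphisms agreeing on the generators $\mathbb{CP}^{2k}$ agree everywhere. On $\mathbb{CP}^{2k}$ we have $\sigma=1$, since $H^{2k}$ is spanned by $h^k$ and $\langle h^k,h^k\rangle=1$. Also $p(T\mathbb{CP}^{2k})=(1+h^2)^{2k+1}$, so $\int L_k=$ the coefficient of $h^{2k}$ in $(h/\tanh h)^{2k+1}$. By the residue calculation $\frac{1}{2\pi i}\oint \frac{dz}{\tanh^{2k+1}z}$, with the substitution $u=\tanh z$, this equals $\operatorname{Res}_{u=0}\frac{du}{(1-u^2)u^{2k+1}}=1$. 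The two sides agree, which proves the formula.

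The main obstacle is Thom's structure theorem for rational cobordism. It needs the Pontryagin–Thom construction and Serre's rational homotopy computation of $MSO$, and I would cite it rather than reprove it. Since the statement is quoted as classical background, it is reasonable to assume this input. Everything else is routine: cobordism invariance, multiplicativity, and the residue computation.
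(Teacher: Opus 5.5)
Your proof is correct. It is the standard Thom--Hirzebruch argument: both sides are ring homomorphisms $\Omega^{SO}_*\otimes\mathbb{Q}\to\mathbb{Q}$, Thom's theorem reduces the identity to the generators $\mathbb{CP}^{2k}$, and the residue computation gives $\int_{\mathbb{CP}^{2k}}L_k=1=\sigma(\mathbb{CP}^{2k})$. The paper itself gives no proof; it quotes the formula as classical background with a reference to Milnor--Stasheff, where this same argument appears.

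One loose end is harmless. Your K\"unneth sketch of $\sigma(M\times N)=\sigma(M)\sigma(N)$ does not cover two factors of dimension $\equiv 2 \pmod 4$, where the middle block is a tensor product of two skew forms. However, you only need multiplicativity for products of the $\mathbb{CP}^{2k}$, so the proof is unaffected.
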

In dimension 4, we explicitly get 
\begin{eqnarray}
   \nonumber \sigma(M) &=&\int_M L\left(\frac{\Omega}{2\pi}\right)\\
   \nonumber &=& -\frac{1}{24\pi^2}\int_M \text{Tr}(\Omega \wedge \Omega)\\
  \label{EQ-27}  &=& \frac{1}{12\pi^2}\int_M \left(|W^+|^2-|W^-|^2\right) d\text{vol}.
\end{eqnarray}
where $W^+,\, W^-$ represent self-dual and anti-self dual components of the Weyl curvature tensor $W$ \cite{MIL-STA1, TU1}.

Generalized Gauss-Bonnet theorem and the Hirzebruch signature formula are both particular forms of the Atiyah-Singer index theorem. Later, we will use the Atiyah-Patodi-Singer index theorem \cite{APS1} for manifolds with boundary.

\subsection{Seiberg-Witten Equations}
A spinor bundle over a Riemannian manifold $M$ is a Hermitian vector bundle $S$ of complex rank $2^m$, with a map $\Gamma:TM \to End\, S$ satisfying the condition $\Gamma(v)+\Gamma^*(v)=0$ and $\Gamma(v)\Gamma^*(v)=-|v|^2Id$, for all $v\in TM.$ A spin connection on $S$ is a connection $$\nabla : C^{\infty}(S) \to C^{\infty}(T^*M\otimes S)$$ compatible with the Levi-Civita connection $\nabla^{LC}$ such that for any section $s$ of $S$ and any two vector fields $u,\, v$ on $M$, we have $$\nabla_v(u\cdot s)=\nabla_v^{LC}(u)\cdot s+ u\cdot \nabla_v(s).$$

The Seiberg-Witten equations are given in terms of a pair $(A,\phi)$, where $A$ is a spin connection and $\phi$ is a section of $S^+$, where $S=S^+\oplus S^-$. Let $D_A$ be the dirac operator correspontiong to $A$, $\hat{A}$ and $F_{\hat{A}}$ are induced connection and curvature on the line bundle $L$, respectively, where $L$ is the determinat line bundles of $S^+$ and $S^-$ and it induces same connection on both the spaces. The equations are 
\begin{eqnarray}
    D_A\phi=0 \\ F^+_{\hat{A}}=\frac{1}{4}\langle e_i e_j\phi,\phi\rangle e^i\wedge e^j,
\end{eqnarray}
where $\langle,\rangle$ is the inner product on the fibers of $S^+$, $\{e_i\}$ is a local basis of $TX$ acting on $\phi$ by clifford multiplication, and $\{e^i\}$ is a local basis of $T^*X$. For more discussion, see \cite{SAL1,BOO-BLEE1,MAR1}.

\subsection{$L^2$ Cohomology and $L^2$ Chern-Weil theory}
Suppose $(M,g)$ is an oriented noncompact $n$-manifold and $L^2\Omega^k(M,g)$ denotes the $ L^2$-completion of $ L^2$-norm bounded smooth $k$-forms with respect to $g$. If we restrict the deRham differential $d$ to $L^2\Omega^k(M,g)$, we have a Hilbert complex $$\cdots \to L^2\Omega^{k-1}(M,g) \to L^2\Omega^{k}(M,g) \to L^2\Omega^{k+1}(M,g) \to \cdots$$
If $Dom^k(d)=\{\alpha\in L^2\Omega^{k}(M,g)|d\alpha \in L^2\Omega^{k+1}(m,g)\}$ and $Z^k(M,g)=\{\alpha\in L^2\Omega^{k-1}(M,g)|d\alpha =0\}$, the $k$th $L^2-$cohomology is defined as $$H^k_{L^2}(M,g)=Z^k(M,g)/\overline{d\,Dom^{k-1}(d)}.$$

Suppose $(M,g)$ is a complete, finite volume 4-manifold and $\mathcal{L}$ is a complex line bundle over $M$, and $A$ is a connection on $\mathcal{L}$ such that curvature form $F_A\in L^2\Omega^{k}(M,g).$ Then the $L^2$ Chern class of $\mathcal{L}$ is defined as $$c_1(\mathcal{L})=\frac{i}{2\pi}[F_A]_{L^2}.$$
For more detailed discussion, see \cite{Xu1}. 

\section{Main Results}
In this section, we discussed on Chern-Simons correction term, results on Seiberg-Witten Gauge theory, and their applications using fibered geometry at infinity.

\subsection{Chern-Simons correction term}
Suppose $(M,g)$ is a complete noncompact manifold with fibered geometry at infinity and $M_{\epsilon}=\{x\geq \epsilon\}$ with $\partial M_{\epsilon}=\{x=\epsilon\}$ for sufficiently small positive $\epsilon$. The metric on $M_{\epsilon}$ is not product-type near the boundary, so the Chern-Simons term arises. 

The Euler characteristic and signature of such manifolds are determined by the Atiyah-Patodi-Singer index formula \cite{APS1}. By their topological nature, we have $$\chi(M)=\chi(M_{\epsilon}),\,\,\,\, \sigma(M)=\sigma(M_{\epsilon}).$$

Let $P$ be a homogeneous polynomial of degree $r$. Let $\omega$ and $\omega '$ be two connections on the bundle and $\Omega$ and $\Omega '$ their curvatures. Then $P(\Omega)$ defines a global form. Let $\omega_t=\omega +t(\omega ' - \omega), 0\leq t \leq 1,$ be the interpolation of $\omega,\omega '$ and its curvature $\Omega_t=\text{d}\omega_t +\omega_t \wedge \omega_t$. Then we have 
\begin{equation}\label{EQ-31}
    P(\Omega ') - P(\Omega)=\text{d}\int_0^1 rP(\omega '-\omega,\underbrace{\Omega_t,\ldots,\Omega_t}_{r-1})=\text{d}Q(\omega ',\omega)
\end{equation}
where $$\int_0^1rP(\omega '-\omega,\underbrace{\Omega_t,\ldots,\Omega_t}_{r-1})=Q(\omega ',\omega).$$ For detailed discussion, see \cite{EGH1}.

If $N$ is a compact oriented 4-dimensional Riemannian manifold with boundary $\partial N$ and it has a metric which is product type near the boundary, then $$\chi(N)=\int_N\text{Pf}\left(\frac{\Omega}{2\pi}\right), $$ and $$\sigma(N)=\int_N L\left(\frac{\Omega}{2\pi}\right) - \frac{1}{2}\eta(\partial N),$$ where $\eta(\partial N)$ represents the eta invariant of the signature operator on the boundary. 

Let $g$ be a metric on $N$ which is not product type near the boundary $\partial N$; then its expression near the boundary will be of the form $$g=dt^2+\gamma(t),$$ where $t$ is the distance from the boundary and $\gamma(t)$ is the restriction of $g$ on $\partial N$. Let $g'$ be another metric on $N$ that equals $g$ except near the boundary, but is of product type near the boundary. Suppose that $\omega$ and $\omega '$ are the connection forms of $g$ and $g '$, respectively, and $\Omega$ and $\Omega '$ their respective curvatures. Then the Chern-Simons correction term to the Atiyah-Patodi-Singer index theorem is given by, using (\ref{EQ-31}), 
\begin{equation}\label{EQ-2.6}
    \int_N P(\Omega ') - \int_N P(\Omega) = \int_N \text{d}Q(\omega ',\omega)=\int_{\partial N} Q(\omega ',\omega)
\end{equation}
where $$\int_0^1rP(\xi,\underbrace{\Omega_t,\ldots,\Omega_t}_{r-1})=Q(\omega ',\omega),$$ where $\xi = \omega - \omega '$ is the second fundamental form at the boundary. For detailed discussion, see \cite{APS1,EGH1}. 
 Thus, the new expression of the Euler characteristic and signature containing the Chern-Simons correction term will be 
\begin{equation}\label{EQ-3.3}
    \chi(N)=\int_N\text{Pf}\left(\frac{\Omega}{2\pi}\right) - \int_{\partial N} Q_{\chi}(\omega ',\omega),
\end{equation}
and
\begin{equation}\label{EQ-3.4}
    \sigma(N)=\int_N L\left(\frac{\Omega}{2\pi}\right) - \frac{1}{2}\eta(\partial N) - \int_{\partial N} Q_{\sigma}(\omega ',\omega)
\end{equation}
where $Q_{\chi}$ and $Q_{\sigma}$ are Chern-Simons correction terms for Euler characteristics and signature, respectively.

Let $g_1$ and $g$ be two metrics such that $g_1=g+xs$, where $s$ is a symmetric smooth two-tensor satisfying $s(x^2\partial_x ,\cdot)\equiv 0$. Then we have the following lemma:
\begin{lemma}\label{LM-3.1}
    Let $Q_1,Q$ be the Chern-Simons correction term corresponding to $g_1,g$, respectively. Then we have $$\lim_{\epsilon \to 0}\int_{\partial M_{\epsilon}}Q_1 = \lim_{\epsilon \to 0} \int_{\partial M_{\epsilon}}Q$$
\end{lemma}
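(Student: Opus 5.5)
The plan is to work entirely in the $\phi$-tangent bundle $^{\phi}T\Bar{M}$. There the Levi-Civita connections of both $g$ and $g_1$ are true connections, smooth up to the boundary, by the Proposition on the Levi-Civita connection stated earlier. The Chern-Simons forms are then polynomial expressions in these smooth objects, so we can compare them pointwise on $\partial M_{\epsilon}$ and control the difference by powers of $x=\epsilon$.

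\textbf{Step 1.} On a collar of $\partial \Bar{M}$, write $\omega,\omega_1$ for the connection forms of $g,g_1$, taken with respect to $\phi$-orthonormal frames. Write $\omega',\omega_1'$ for the connection forms of the product-type modifications near $\partial M_{\epsilon}$. Since $g_1=g+xs$ with $s\in\mathcal{S}^2(^{\phi}T\Bar{M})$ smooth, a Gram--Schmidt argument produces $\phi$-orthonormal frames for $g_1$ of the form $(\mathrm{Id}+xA)$ applied to those of $g$, with $A$ smooth on $\Bar{M}$. The Koszul formula then gives
\begin{equation*}
\omega_1=\omega+x\,\beta+dx\wedge(\cdots),
\end{equation*}
with $\beta$ a smooth $\mathrm{End}(^{\phi}T\Bar{M})$-valued 1-form on $\Bar{M}$. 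The same holds for the curvatures: $\Omega_1-\Omega=O(x)$ as sections of $\Lambda^2T^*\Bar{M}\otimes\mathrm{End}(^{\phi}T\Bar{M})$. The condition $s(x^2\partial_x,\cdot)\equiv 0$ ensures that the normal direction is undisturbed. Hence $t=$ distance to $\partial M_\epsilon$ is unchanged to first order, and the second fundamental forms satisfy $\xi_1-\xi=O(x)$ smoothly.

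\textbf{Step 2.} Restrict to $\partial M_\epsilon=\{x=\epsilon\}$. By the formula after (\ref{EQ-2.6}), $Q=\int_0^1 rP(\xi,\Omega_t,\ldots,\Omega_t)\,dt$, and similarly for $Q_1$. Since $P$ is multilinear, the difference $Q_1-Q$ is a finite sum of terms, each containing at least one factor $\xi_1-\xi$ or $\Omega_{1,t}-\Omega_t$. Because $\mathrm{End}(^{\phi}T\Bar{M})|_M\cong\mathrm{End}(TM)$ canonically up to conjugation, the invariant polynomial $P$ is unaffected by the frame change. So $Q_1-Q=\epsilon\,\rho_\epsilon$, where $\rho_\epsilon$ is the restriction to $\{x=\epsilon\}$ of a 3-form that is smooth on $\Bar{M}$ as a section of $\Lambda^3T^*\Bar{M}$.

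\textbf{Step 3.} Pull back to $\partial\Bar{M}$ via the collar identification $\{x=\epsilon\}\cong\partial\Bar{M}$. Then $\int_{\partial M_\epsilon}\rho_\epsilon$ converges to $\int_{\partial\Bar M}\rho_0$ and in particular is bounded. Therefore
\begin{equation*}
\left|\int_{\partial M_\epsilon}(Q_1-Q)\right|\le C\epsilon\to 0,
\end{equation*}
which gives the lemma. The existence of each limit separately is not needed for the equality, but it also follows from the same smoothness.

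\textbf{Main obstacle.} The delicate point is Step 1. The forms must be measured with respect to the ordinary $T^*\Bar{M}$, not the rescaled $^{\phi}T^*\Bar{M}$. A $\phi$-1-form such as $dx/x^2$ or $dy/x$ pairs with genuine boundary vectors with blow-up factors, and these factors could cancel the gain of $x$. I would handle this by invoking the Proposition precisely: $\nabla^{\phi}$ takes values in $T^*\Bar{M}\otimes{}^{\phi}T\Bar{M}$, so the connection 1-forms are honest smooth forms. I would then check via Koszul in the frame $x^2\partial_x,x\partial_y,\partial_z$ that perturbing the metric coefficients by $x s_{ij}$, with $s_{ij}$ smooth, perturbs the Christoffel-type coefficients by $x\cdot(\text{smooth})$ plus terms from differentiating $x$. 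Those latter terms are $dx\cdot s_{ij}$, which remain smooth and vanish on restriction to $\{x=\epsilon\}$ because $dx|_{\partial M_\epsilon}=0$.
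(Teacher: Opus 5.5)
\textbf{There is a genuine gap, and it is exactly the point you flagged as the main obstacle.} The Proposition gives you that $\nabla^{g}$ and $\nabla^{g_1}$ are \emph{each} smooth sections of $T^*\bar M\otimes\mathrm{End}({}^{\phi}T\bar M)$. It does not say that their difference vanishes at $x=0$, and when $\dim F>0$ it need not.

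In the $\phi$-frame the Christoffel coefficients do change by $x\cdot(\text{smooth})$. In the base directions, however, they are paired with the coframe $dy/x$, which absorbs that factor of $x$. To get $\omega_1-\omega=O(x)$ on level sets you would need those coefficients to change by $O(x^2)$. The fields $x^2\partial_x$ and $x\partial_y$ do differentiate $xA$ into $O(x^2)$. A fiber field $\partial_z$, however, only produces $x\,\partial_z A$, and such terms enter the Koszul formula through brackets like $[\partial_z, xA\,e_k]$. Your Koszul check tracked derivatives landing on $x$ (the harmless $dx$ terms), but not derivatives of $s$ along the fibers.

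\textbf{A concrete counterexample to Step 1.} Let $B=S^2$ with the round metric, $F=S^1$, and $g=dt^2+t^2g_{S^2}+dz^2$, which is flat. Take $s=b(z)\,x^{-2}p^*g_B$; this is smooth in $\mathcal{S}^2({}^{\phi}T^*\bar M)$ and satisfies $s(x^2\partial_x,\cdot)=0$. Then $g_1=dt^2+f^2g_{S^2}+dz^2$ with $f=x^{-1}\sqrt{1+x\,b(z)}$. In orthonormal coframes, $\omega_1{}^i_z=f_z\hat\theta^i\to\frac{1}{2}b'(z)\hat\theta^i$, while $\omega^i_z=0$. The isometric frame identification is diagonal and does not change this block to leading order. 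So $\omega_1-\omega$ does not vanish on $\partial\bar M$.

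\textbf{The failure carries into Step 2.} Here $Q_\chi\equiv 0$ for $g$. For $g_1$ one has $\xi_1{}^i_t=f_t\hat\theta^i\to\hat\theta^i$ and $\xi_1{}^z_t=0$, but $\Omega_1{}^j_z|_{\partial M_\epsilon}=f_{zz}\,dz\wedge\hat\theta^j\to\frac{1}{2}b''(z)\,dz\wedge\hat\theta^j$. Consequently $Q_{\chi,1}$ has leading term a nonzero multiple of $b''(z)\,\mathrm{vol}_{S^2}\wedge dz$. This is $O(1)$ pointwise, not of the form $\epsilon\rho_\epsilon$. Its integral tends to zero only because this leading term is exact on $\partial\bar M$.

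So for fibered boundaries with $\dim F>0$, a pointwise $O(\epsilon)$ bound cannot prove the lemma. You need an additional idea showing that the non-decaying part of $Q_1-Q$ integrates to zero in the limit. The paper gives no argument of its own here; it only cites Dai--Wei.

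\textbf{Where your argument does work.} It is fine in the asymptotically conical case $F=\{\mathrm{pt}\}$, which is the only case used in the main theorem. There the $\phi$-frame consists only of $x^2\partial_x$ and $x\partial_y$, and every structure function of the perturbed frame changes by $O(x^2)$. Hence $\omega_1-\omega=x\beta+(\cdots)\,dx$ does hold, and Steps 1--3 go through as written.
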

\begin{proof}
    See \cite{DAI-WEI1}.
\end{proof}

If $P$ is Pfaffian, then explicit form of $Q_{\chi}$ 
\begin{equation}
    \int_{\partial N}Q_{\chi}(\omega ',\omega)=\frac{1}{32\pi^2}\int_{\partial N}\epsilon_{abcd}\left(2\xi_b^a \wedge \Omega_d^c -\frac{4}{3} \xi_b^a \wedge \xi_e^c \wedge \xi_d^e \right)
\end{equation}
 and if $\Bar{M}=M\cup \partial\Bar{M}$, where $(M,g)$ is a noncompact complete Riemannian manifold with fibered geometry at infinity and $N=M_{\epsilon}=\{x\geq \epsilon\}$ then we have
 \begin{equation}\label{EQ-36}
     \lim_{\epsilon \to 0}\int_{\partial M_{\epsilon}}Q_{\chi}=\frac{1}{2\pi^2}\text{vol}(\partial \Bar{M})+\alpha(\partial \Bar{M})
 \end{equation}
where 
\begin{equation}
    \alpha(\partial \Bar{M})=\frac{1}{8\pi^2}\int_{\partial \Bar{M}}\epsilon_{abc}\omega^a \wedge\left(\Omega_c^b-\omega^b\wedge \omega^c\right)
\end{equation}
For the above discussion, see \cite{CHERN1, EGH1}.

Let $(M,g)$ be a 4-manifold that possesses $L^2$ solution $(A,\phi)$ to the Seiberg-Witten equations. Later in this paper, we will discuss the Seiberg-Witten equations. Di Cerbo \cite{DCERBO} proved an $L^2$ analogue of an estimate proved by LeBrun \cite{LEB1, LEB2, LEB3}. The version of the theorem proved by Di Cerbo is as follows
\begin{theorem}[Di Cerbo]
    Let $(M,g)$ be a closed 4-manifold and $(A,\phi)$ be an $L^2$ solution of the Seiberg-Witten equation. Then 
    \begin{equation}\label{EQ-38}
    \frac{1}{32\pi^2}\int_Xs_g^2dvol_g\geq c_1^2(\mathfrak{s})    
    \end{equation}
    where $s_g$ is the scalar curvature, $\mathfrak{s}$ is the $Spin^c$ structure on $M$, and $c_1(\mathfrak{s})$ is the $L^2$ chern class.
\end{theorem}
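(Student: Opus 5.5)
We follow LeBrun's argument for the closed case, replacing ordinary cohomology by $L^2$-cohomology. Consider the Weitzenböck formula for the Dirac operator $D_A$ twisted by the $Spin^c$ connection, applied to the spinor $\phi$:
$$D_A^*D_A\phi=\nabla_A^*\nabla_A\phi+\frac{s_g}{4}\phi+\frac{1}{2}F_{\hat A}^+\cdot\phi .$$
Since $D_A\phi=0$ and, by the second Seiberg-Witten equation, $F^+_{\hat A}\cdot\phi=\frac12|\phi|^2\phi$, this becomes
$$0=\nabla_A^*\nabla_A\phi+\frac{s_g}{4}\phi+\frac14|\phi|^2\phi .$$
We pair with $\phi$ and integrate. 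In the closed case, integration by parts gives
$$0=\int|\nabla_A\phi|^2+\frac{s_g}{4}|\phi|^2+\frac14|\phi|^4 ,$$
and therefore $\int|\phi|^4\le\int(-s_g)|\phi|^2$. For an $L^2$ solution we must justify dropping the boundary term. We do this with a cutoff argument: multiply by $\rho_R^2$, where $\rho_R$ is a cutoff with $|d\rho_R|\le C/R$, and let $R\to\infty$. Completeness of $g$ and $\phi,\nabla_A\phi\in L^2$ make the error terms vanish.

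Next we apply Cauchy--Schwarz to the inequality $\int|\phi|^4\le\int(-s_g)|\phi|^2$, which gives $\int|\phi|^4\le\int s_g^2$. The curvature equation gives the pointwise identity $|F_{\hat A}^+|^2=\frac18|\phi|^4$. Hence
$$\int|F^+_{\hat A}|^2\le\frac18\int s_g^2 .$$

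Finally we relate this to $c_1^2(\mathfrak s)$. Since $c_1=\frac{i}{2\pi}[F_{\hat A}]_{L^2}$, we have
$$4\pi^2c_1^2=\int F_{\hat A}\wedge F_{\hat A}=\int|F_{\hat A}^+|^2-\int|F^-_{\hat A}|^2\le\int|F^+_{\hat A}|^2 .$$
Combining the two inequalities gives $4\pi^2c_1^2\le\frac18\int s_g^2$, which is exactly (\ref{EQ-38}).

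The main obstacle is the identity $\int F_{\hat A}\wedge F_{\hat A}=4\pi^2 c_1^2$ in the $L^2$ setting. The integral $\int F\wedge F$ must depend only on the $L^2$-cohomology class, meaning that exact $L^2$ corrections contribute nothing. We would handle this using the $L^2$ Chern--Weil theory cited from \cite{Xu1}: on a complete manifold, $\int d\beta\wedge\alpha=0$ for $\alpha$ closed in $L^2$ and $\beta\in Dom(d)$. This again follows from the cutoff argument, together with the Hodge decomposition into $L^2$-harmonic forms. When $M$ is closed these issues disappear and the proof reduces to LeBrun's argument.
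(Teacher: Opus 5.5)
Your proposal is correct and follows essentially the same approach as the paper, which gives no proof of its own and defers to Di Cerbo. That cited result is LeBrun's Weitzenb\"ock--Cauchy--Schwarz estimate $\int|F^+_{\hat A}|^2=\frac18\int|\phi|^4\le\frac18\int s_g^2$, carried to the $L^2$ setting by cutoff arguments and $L^2$ Chern--Weil theory; your direct bound $4\pi^2c_1^2\le\int|F^+_{\hat A}|^2$ gives the weaker $c_1^2$ in place of LeBrun's $(c_1^+)^2$, which is all the statement asks for (and on a closed $M$, as literally stated, no $L^2$ issues arise).
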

\begin{proof}
    For detailed proof, see \cite{DCERBO}.
\end{proof}
If the fiber is a single point, then $B=\partial \Bar{M}$ and the fibered boundary metric
$$g\sim dt^2+t^2g_{\partial \Bar{M}},$$ where $t$ can be thought of as the distance from the base point. In this case, the geometry at infinity is asymptotically conical and we say that the manifold $M$ is asymptotic to a cone over $(\partial \Bar{M},g_{\partial \Bar{M}})$. Now, we are ready to prove the main result as follows
\begin{theorem}
    Let $(M,g)$ be a complete 4-manifold which is asymptotic to a cone over $(\partial \Bar{M},g_{\partial \Bar{M}})$ and suppose that $(A,\phi)$ is an $L^2$ solution to the Seiberg-Witten equations. Then 
    \begin{eqnarray*}
        2\chi(M) - 3\sigma(M) \geq \int_{M}\left[|W|^2 - \left( |W^+|^2 - |W^-|^2\right) - |Z|^2\right]\text{d}vol+\frac{3}{2}\eta(\partial\Bar{M})+\frac{1}{3}c_1^2(\mathfrak{s}) \\
       - \left[\frac{1}{\pi^2}vol(\partial\Bar{M}+2\alpha(\partial \Bar{M})\right]
    \end{eqnarray*}
    
    where $\mathfrak{s}$ is the $Spin^c$ structure on $M$, and $c_1(\mathfrak{s})$ is the $L^2$ chern class.
\begin{proof}
   Suppose $\Bar{M}=M\cup \partial \Bar{M}$ is compactification $M$ and $M_{\epsilon}=\{x\geq \epsilon\}$. Then from (\ref{EQ-3.3}) and (\ref{EQ-3.4}), we have
    \begin{eqnarray}
        \chi(M_{\epsilon})=\int_{M_{\epsilon}}\text{Pf}\left(\frac{\Omega}{2\pi}\right) - \int_{\partial M_{\epsilon}} Q_{\chi}(\omega ',\omega), \\
        \sigma(M_{\epsilon})=\int_{M_{\epsilon}} L\left(\frac{\Omega}{2\pi}\right) - \frac{1}{2}\eta(\partial M_{\epsilon}) - \int_{\partial M_{\epsilon}} Q_{\sigma}(\omega ',\omega).
    \end{eqnarray}
    Then using Lemma \ref{LM-3.1} and taking the limit $\epsilon \to 0$, equations (3.9) and (3.10) reduce to
    \begin{eqnarray}
      \label{EQ-311}  \sigma(M)=\int_M L\left(\frac{\Omega}{2\pi}\right) - \frac{1}{2}\eta(\partial \Bar{M})\\
      \label{EQ-312}  \chi(M)=\int_M\text{Pf}\left(\frac{\Omega}{2\pi}\right) - \lim_{\epsilon \to 0} \int_{\partial M_{\epsilon}} Q_{\chi}(\omega ',\omega)
    \end{eqnarray}
    Again using (\ref{EQ-26}) and (\ref{EQ-27}), equations (\ref{EQ-311}) and (\ref{EQ-312}) become
   \begin{eqnarray}
      \label{EQ-313}  3\left[\sigma(M)+ \frac{1}{2}\eta(\partial \Bar{M})\right]=\frac{1}{4\pi^2}\int_M \left(|W^+|^2-|W^-|^2\right) d\text{vol}, \\
      \label{EQ-314}  2\left[\chi(M)+ \lim_{\epsilon \to 0} \int_{\partial M_{\epsilon}} Q_{\chi}(\omega ',\omega)\right]=  \frac{1}{4\pi^2}\int_M \left(|W|^2 - |Z|^2+\frac{1}{24}S^2\right)d\text{vol}.
    \end{eqnarray} 
    Subtracting (\ref{EQ-313}) from (\ref{EQ-314}) and using (\ref{EQ-36}), (\ref{EQ-38}), we have
    \begin{eqnarray*}
        2\chi(M) - 3\sigma(M) \geq \int_{M}\left[|W|^2 - \left( |W^+|^2 - |W^-|^2\right) - |Z|^2\right]\text{d}vol+\frac{3}{2}\eta(\partial\Bar{M})+\frac{1}{3}c_1^2(\mathfrak{s}) \\
       - \left[\frac{1}{\pi^2}vol(\partial\Bar{M}+2\alpha(\partial \Bar{M})\right].
    \end{eqnarray*}
\end{proof}
\begin{corollary}
     Let $(M,g)$ be a complete Einstein 4-manifold which is asymptotic to a cone over $(\partial \Bar{M},g_{\partial \Bar{M}})$ and suppose that $(A,\phi)$ is a $L^2$ solution to the Seiberg-Witten equations. Then 
    \begin{eqnarray*}
        2\chi(M) - 3\sigma(M) \geq \frac{3}{2}\eta(\partial\Bar{M})+\frac{1}{3}c_1^2(\mathfrak{s}) - \left[\frac{1}{\pi^2}vol(\partial\Bar{M}+2\alpha(\partial \Bar{M})\right].
    \end{eqnarray*}
\end{corollary}
\begin{proof}
    For Einstein manifolds, $|W|^2=|W^+|^2+|W^-|^2$ and $Z=0$, hence the result follows from above theorem.
\end{proof}
\end{theorem}
\section{Concluding Remarks}
In the introduction, we discuss many generalizations of the Hitchin-Thorpe inequality and their applications to determine many geometric properties of a space. Then we discussed on Seiberg-Witten gauge theory, $L^2$-cohomology, and $ L^2$-Chern-Weil theory. Next, we discussed on fibered geometry at infinity and derived expression of Euler number and signature of these spaces using the Chern-Simons correction term. Then we presented some results of Seiberg-Witten gauge theory. In the end, we derived a result using gauge theoretic tools and fibered geometry at infinity.\\
\vskip6pt
%\noindent \textbf{Acknowledgement:} 
%\\ \vskip4pt
\noindent \textbf{Data availibility:} The authors confirm that the data supporting the findings of this study are included within the article.\\ \vskip2pt
\noindent \textbf{Conflict of interest:} The authors confirm that there is no conflict of interest.

\vspace{.25in}

\noindent{Shyamal Kumar Hui \\ Professor, Department of Mathematics, The University of Burdwan, Golapbag, Burdwan 713104, West Bengal, India}\\
Email: \texttt{skhui@math.buruniv.ac.in}\\

\noindent{Chandan Kumar Adak \\ Research Scholar, Department of Mathematics, The University of Burdwan, Golapbag, Burdwan 713104, West Bengal, India\\
\&\\
Assistant Professor, Department of Mathematics, Asansol Girls' College, Asansol 713304, West Bengal, India}\\
Email: \texttt{chandankradak@gmail.com}

\end{document}